\newtheorem{theorem}{Theorem}
\newtheorem{lemma}[theorem]{Lemma}
\newtheorem{proposition}[theorem]{Proposition}
\theoremstyle{definition}
\newtheorem{problem}{Problem}
\newtheorem*{construction*}{Construction $\mathfrak{M}$}
\newtheorem*{theorem*}{The Rho Function}
\title{Incidence Gain Graphs and Generalized Quadrangles}
\author{Ryan McCulloch\footnote{Ryan McCulloch, Binghamton University, rmccullo1985@gmail.com}}
\date{February 3, 2025}
\begin{document}

\maketitle

\begin{abstract}
We demonstrate a construction method based on a gain function that is defined on the incidence graph of an incidence geometry.  Restricting to when the incidence geometry is a linear space, we show that the construction yields a generalized quadrangle provided that the gain function satisfies a certain bijective property.  Our method is valid for finite and infinite geometries.  We produce a family of generalized quadrangles by defining such a gain function on an affine plane over an arbitrary field.
\end{abstract}

{\small
\noindent
{\bf MSC2000\,:} Primary 05B25; Secondary 51E12.

\noindent
{\bf Key words\,:} gain graph, incidence graph, incidence structure, generalized quadrangle, generalized polygon, linear space, Steiner system, affine plane, affine geometry, incidence geometry, finite geometry.}

\section{Introduction and Theory}
This journey through incidence geometry passes through gain graphs, generalized polygons, linear spaces, and group actions.  We attempt to be as general as needed, with a curiosity towards the infinite case.  Our goal is to demonstrate a construction method using incidence gain graphs, and to show when and how the construction produces a generalized quadrangle.

\subsection{Definitions}

An \textit{incidence structure} is a triple $(\mathscr{P},\mathscr{B},\mathrm{I})$ where $\mathscr{P}$ is a set of points, $\mathscr{B}$ is a set of lines (or blocks), $\mathscr{P}$ and $\mathscr{B}$ are nonempty disjoint sets of objects, and $\mathrm{I} \subseteq \mathscr{P} \times\mathscr{B}$ is an incidence relation which is symmetric.  Assume that $\mathrm{I}$ is nonempty.  We write $p \  \mathrm{I} \  b$ or $b \  \mathrm{I} \ p$ to indicate that $(p,b) \in \mathrm{I}$ and we write $p \  \cancel{\mathrm{I}} \  b$ or $b \  \cancel{\mathrm{I}} \  p$ to indicate that $(p,b) \notin \mathrm{I}$.  We write $\mathscr{P}_b = \{ q \in \mathscr{P} : q \ \text{I} \ b \}$.  The $\textit{incidence graph}$ of an incidence structure $(\mathscr{P},\mathscr{B},\mathrm{I})$ is a bipartite graph $\Gamma$ with vertex set $V = \mathscr{P} \bigcup\mathscr{B}$ and edge set $E = \{ \{b,p\} : b \  \mathrm{I} \  p \}$.  We often drop the brackets and write an edge as $e = bp$.   

A $\textit{gain graph}$ is a pair $(\Gamma,\varphi)$, where $\Gamma = (V,E)$ is a graph and $\varphi$ is a $\textit{gain function}$, which is a homomorphism $\varphi : \mathscr{F}(E) \rightarrow G$ where $G$ is a group, called the \textit{gain group}, and $\mathscr{F}(E)$ is the free group on $E$.  We see that the gain function $\varphi$ is uniquely determined by the values $\varphi(e)$ for $e \in E$ the generating set.

Note that for an edge $e$, we have $\varphi(e^{-1}) = \varphi(e)^{-1}$, and it is understood that $e^{-1}$ means $e$ with its orientation reversed.  Let us declare that all edges in an incidence graph $\Gamma$ are oriented from ``line to point''. 

A gain graph of the incidence graph of an incidence structure shall be called an \textit{incidence gain graph}.

Throughout this article, we will have a gain group $G$ acting on a set $\Lambda$ on the left.  

We define \textit{conjugation of g by h} in a group by ${}^hg := h g h^{-1} $.  We have $$({}^hg) \cdot \lambda = \mu \Longleftrightarrow g \cdot (h^{-1} \cdot \lambda) = h^{-1} \cdot \mu.$$  Note that $\lambda$ is fixed by ${}^hg$ if and only if $h^{-1} \cdot \lambda$ is fixed by $g$.

A group action of $G$ on $\Lambda$ is \textit{free} if $g \cdot \lambda = \lambda$ for some $\lambda \in \Lambda$ implies that $g$ is the identity element of $G$.  A group action is \textit{transitive} if for each $\mu, \lambda \in \Lambda$ there is $g \in G$ so that $\mu = g \cdot \lambda$.  A group action is \textit{regular} if it is both free and transitive.  Note that the action of a group $G$ on itself via multiplication is a regular action.

A \textit{walk} in a graph $\Gamma$ is a sequence $$w = (u_0,e_1,u_1,e_2,\dots ,e_n,u_n)$$ where the vertices $u_{i-1}$ and $u_i$ are incident with the edge $e_i$.  For a gain graph $(\Gamma,\varphi)$, we define $$ \varphi_w := \varphi({e_n}^{\delta_n} \cdots {e_1}^{\delta_1}) = \varphi(e_n)^{\delta_n} \cdots \varphi(e_1)^{\delta_1},$$ where $\delta_i = 1$ if $e_i$ is oriented from $u_{i-1}$ to $u_i$ and $\delta_i = -1$ if $e_i$ is oriented from $u_i$ to $u_{i-1}$.  We multiply ``backwards'' because the gain group $G$ acts on a set $\Lambda$ on the left.

Given a gain graph $(\Gamma,\varphi)$ with $\Gamma = (V,E)$ and gain group $G$, and any function $f : V \rightarrow G$, we obtain another gain function ${}^f\varphi$ by defining $${}^f\varphi (e) := f(v)\varphi(e)f(u)^{-1},$$
for every edge $e = uv$ oriented from $u$ to $v$.  This procedure is known as \textit{switching} $\varphi$ \textit{by} $f$, and the function $f$ is called a \textit{switching function}.

Gain graphs and switching were introduced in \cite{Zas89} for matroids.  Our definitions may differ slightly than in \cite{Zas89} to suit our purposes.  Gain graphs are also known as voltage graphs in the context of surface embeddings of graphs.

Given $(\mathscr{P},\mathscr{B},\mathrm{I})$ and $(\mathscr{P}',\mathscr{B}',\mathrm{I}')$ incidence structures, an \textit{incidence structure isomorphism} is a pair $(g_1,g_2)$ of bijections $g_1 : \mathscr{P} \rightarrow \mathscr{P}'$ and $g_2 : \mathscr{B} \rightarrow \mathscr{B}'$ so that $p \  \mathrm{I} \  b$ if and only if $g_1(p) \  \mathrm{I}' \  g_2(b)$.

For an incidence structure $(\mathscr{P},\mathscr{B},\mathrm{I})$, a $k$\textit{-chain} is a sequence $(u_0, u_1, \dots, u_k)$ of elements $u_i \in \mathscr{P} \cup \mathscr{B}$ with the property that $u_i$ is incident with $u_{i-1}$ for all $i \in \{1, \dots,  k\}$.  We say that the \textit{distance from $u$ to $v$}, denoted $d(u,v)$, is the smallest $k$ such that there exists a $k$-chain $(u = u_0, u_1, \dots, u_k = v)$ from $u$ to $v$.  If no such $k$ exists, we say $d(u,v) = \infty$.  Note that if $d(u,v) < \infty$, we have $d(u,v)$ is even if and only if $u$ and $v$ are either both points or are both lines.

For an integer $n \geq 3$, an incidence structure $(\mathscr{P},\mathscr{B},\mathrm{I})$ is called a \textit{generalized n-gon}, if it satisfies the following two axioms.
\begin{enumerate}
 \item For each $u,v \in \mathscr{P} \cup \mathscr{B}$, we have $d(u,v) \leq n$.
\item For each $u,v \in \mathscr{P} \cup \mathscr{B}$ so that $d(u,v) = k < n$, there exists a unique $k$-chain $(u = u_0,u_1, \dots , u_k = v)$ from $u$ to $v$.
\end{enumerate}

A generalized $n$-gon is \textit{firm} if every element $\mathscr{P} \cup \mathscr{B}$ is incident with at least two other elements, and \textit{thick} if every element in $\mathscr{P} \cup \mathscr{B}$ is incident with at least three other elements. 

When $n=3$, a thick generalized triangle is a projective plane.  When $n=4$ we have a generalized quadrangle, and so on.   We refer the reader to the books \cite{PolyBook} and \cite{GQBook} on generalized polygons and generalized quadrangles.

An \textit{ovoid} of a generalized quadrangle is a set of points $O$ so that each line of the generalized quadrangle is incident with exactly one point of $O$.  Not all generalized quadrangles posses ovoids, but the ones that we construct in this article do.

An incidence structure is a \textit{linear space} if it satisfies the following three axioms.

\begin{enumerate}
\item Any two distinct points are incident with exactly one common line.
\item Each line is incident with at least two points.
\item There exist a point and a line that are not incident.
\end{enumerate}

Suppose $(\mathscr{P},\mathscr{B},\mathrm{I})$ is a linear space and the point set $\mathscr{P}$ has cardinality $v$.  If for every $b \in \mathscr{B}$, the set $\mathscr{P}_b$ has cardinality $k$ for some fixed cardinal $k$, then we call the linear space $(\mathscr{P},\mathscr{B},\mathrm{I})$ a \textit{Steiner system}, denoted $S(2,v,k)$.  We allow arbitrary (finite or infinite) cardinals for $v$ and $k$.

\subsection{General Constructions and Results}

\begin{construction*}
Let $(\Gamma,\varphi)$ be an incidence gain graph with underlying incidence structure $(\mathscr{P},\mathscr{B},\mathrm{I})$ and gain group acting on a nonempty set $\Lambda$.  Construct an incidence structure $(\mathscr{P}',\mathscr{B}',\mathrm{I}')$ as follows:

\begin{itemize}
\item $\mathscr{P}' = X \cup Y$ where $X := \{ x_p \  | \  p \in \mathscr{P} \}$ and $Y := \{ y_{b,\lambda} \  | \  b \in \mathscr{B}, \lambda \in \Lambda \}$.
\item $\mathscr{B}' = Z$ where $Z := \{ z_{p,\lambda} \  | \  p \in \mathscr{P}, \lambda \in \Lambda \}$.
\item $\mathrm{I}'$ is defined so that $x_p \  \mathrm{I}' \  z_{p,\lambda}$ for all $\lambda$ and $y_{b,\lambda} \  \mathrm{I}' \  z_{p,\mu}$ when $b \  \mathrm{I} \  p$ and $\mu = \varphi(e) \cdot \lambda$ for the edge $e = bp$ of the graph $\Gamma$.
\end{itemize}
\end{construction*}

When useful, for an incidence gain graph $(\Gamma,\varphi)$, we will use the notation $\mathfrak{M}(\Gamma,\varphi)$ to indicate the incidence structure from Construction $\mathfrak{M}$.  We use the notation of $x_p$ and $y_{b,\lambda}$ for the ``point'' and ``line'' type points of $\mathfrak{M}(\Gamma,\varphi)$ and the notation of $z_{p,\lambda}$ for the lines.  

The next proposition describes how switching induces an isomorphism of the incidence structures defined by Construction $\mathfrak{M}$.

\begin{proposition}
Let $(\Gamma,\varphi)$ be an incidence gain graph with underlying incidence structure $(\mathscr{P},\mathscr{B},\mathrm{I})$ and gain group acting on a nonempty set $\Lambda$.  Let $f$ be a switching function and let $\mathfrak{M}(\Gamma,\varphi) = (\mathscr{P}',\mathscr{B}', \mathrm{I}')$ and $\mathfrak{M}(\Gamma,{}^f\varphi) = (\mathscr{P}',\mathscr{B}',\mathrm{I}'_f)$.  

\begin{itemize}
\item Define a function $g_1: \mathscr{P}' \rightarrow \mathscr{P}'$ by $g_1(x_p) = x_p$ and $g_1(y_{b,\lambda}) = y_{b,\mu}$ where $\mu = f(b) \cdot \lambda$.
\item Define a function $g_2: \mathscr{B}' \rightarrow \mathscr{B}'$ by $g_2(z_{p,\lambda}) = z_{p,\mu}$ where $\mu = f(p) \cdot \lambda$.
\end{itemize}

The pair $(g_1,g_2)$ is an incidence structure isomorphism from $\mathfrak{M}(\Gamma,\varphi)$ to $\mathfrak{M}(\Gamma,{}^f\varphi)$.
\end{proposition}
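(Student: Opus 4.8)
The plan is to verify directly the three requirements packaged into the definition of an incidence structure isomorphism: that $g_1$ and $g_2$ are bijections, and that incidence is both preserved and reflected by $(g_1,g_2)$. First I would dispatch bijectivity. Because $G$ acts on $\Lambda$ on the left, for each fixed $h \in G$ the map $\lambda \mapsto h \cdot \lambda$ is a bijection of $\Lambda$ with two-sided inverse $\lambda \mapsto h^{-1} \cdot \lambda$. The map $g_1$ is the identity on $X$ and, on $Y$, it permutes the fiber $\{\, y_{b,\lambda} : \lambda \in \Lambda \,\}$ over each $b$ via multiplication by $f(b)$; similarly $g_2$ permutes the fiber over each $p$ via multiplication by $f(p)$. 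Hence $g_1 : \mathscr{P}' \to \mathscr{P}'$ and $g_2 : \mathscr{B}' \to \mathscr{B}'$ are bijections.

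Next I would check the incidence equivalence $u \ \mathrm{I}' \ w \iff g_1(u) \ \mathrm{I}'_f \ g_2(w)$, splitting on the type of the point $u$. If $u = x_p$ and $w = z_{p',\lambda}$, then $x_p \ \mathrm{I}' \ z_{p',\lambda}$ holds exactly when $p = p'$, independently of $\lambda$; since $g_1(x_p) = x_p$ and $g_2(z_{p',\lambda}) = z_{p',\,f(p')\cdot\lambda}$, and $x_p \ \mathrm{I}'_f \ z_{p',\mu}$ again holds exactly when $p = p'$, the condition is unchanged and both directions are immediate. The substantive case is $u = y_{b,\lambda}$ and $w = z_{p,\nu}$. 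Recalling that every edge is oriented from line to point, the edge $e = bp$ has $u=b$, $v=p$, so the switching formula reads ${}^f\varphi(e) = f(p)\,\varphi(e)\,f(b)^{-1}$. Now $y_{b,\lambda} \ \mathrm{I}' \ z_{p,\nu}$ holds iff $b \ \mathrm{I} \ p$ and $\nu = \varphi(e)\cdot\lambda$. Applying the maps gives $g_1(y_{b,\lambda}) = y_{b,\,f(b)\cdot\lambda}$ and $g_2(z_{p,\nu}) = z_{p,\,f(p)\cdot\nu}$, and the target incidence $y_{b,\,f(b)\cdot\lambda} \ \mathrm{I}'_f \ z_{p,\,f(p)\cdot\nu}$ holds iff $b \ \mathrm{I} \ p$ together with
\[
f(p)\cdot\nu \;=\; {}^f\varphi(e)\cdot\bigl(f(b)\cdot\lambda\bigr) \;=\; f(p)\,\varphi(e)\,f(b)^{-1}\cdot f(b)\cdot\lambda \;=\; f(p)\,\varphi(e)\cdot\lambda \;=\; f(p)\cdot\bigl(\varphi(e)\cdot\lambda\bigr).
\]
The incidence relation $b \ \mathrm{I} \ p$ appears identically on both sides, and in the displayed chain the factor $f(b)^{-1}f(b)$ cancels; stripping the common leading $f(p)$ by injectivity of left multiplication by $f(p)$ shows that the displayed equation is equivalent to $\nu = \varphi(e)\cdot\lambda$. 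Thus the two incidences are equivalent, completing both preservation and reflection.

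I do not expect a genuine obstacle here; the statement is a verification. The only point demanding care is the group-theoretic bookkeeping of the left action under the chosen line-to-point orientation: one must apply the switching formula with $u=b$ and $v=p$ so that the conjugating factors land as $f(p)\,\varphi(e)\,f(b)^{-1}$, and then observe that these factors are arranged precisely so that the $f(b)$ terms annihilate against the shift $\lambda \mapsto f(b)\cdot\lambda$ built into $g_1$, while the residual $f(p)$ matches the shift built into $g_2$ and may be removed by freeness of multiplication. In effect the definition of switching is engineered to make exactly this cancellation occur, so once the orientation conventions are pinned down the computation closes immediately.
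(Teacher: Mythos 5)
Your proposal is correct and follows essentially the same route as the paper: a direct verification that reduces to the $y_{b,\lambda}$--$z_{p,\nu}$ incidence case and cancels the $f(b)^{-1}f(b)$ and leading $f(p)$ factors arising from the switching formula ${}^f\varphi(e) = f(p)\varphi(e)f(b)^{-1}$. You merely spell out the bijectivity of $g_1,g_2$ and the $x_p$ case, which the paper dismisses as clear.
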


\begin{proof}
It is clear that $g_1$ and $g_2$ are both bijections.  Also, it is clear that we need only to verify the condition on incidence for the points of type $y_{b, \lambda}$.  Let $b \in \mathscr{B}$ and $p \in \mathscr{P}$ with $b \  \mathrm{I} \  p$, and let $e = bp$ be the edge of $\Gamma$.  We have
$$y_{b, \lambda} \  \mathrm{I}' \  z_{p,\mu} \Longleftrightarrow \mu = \varphi(e) \cdot \lambda.$$

Now $g_1(y_{b, \lambda}) = y_{b,\lambda_1}$ where $\lambda_1 = f(b) \cdot \lambda$, and $g_2(z_{p, \mu}) = z_{p,\mu_1}$ where $\mu_1 = f(p) \cdot \mu$.  Hence
\begin{gather*}
\varphi(e) \cdot \lambda = \mu \Longleftrightarrow \\
f(p)\varphi(e) \cdot \lambda = \mu_1 \Longleftrightarrow \\ 
f(p)\varphi(e) f(b)^{-1} \cdot \lambda_1 = \mu_1 \Longleftrightarrow \\ 
g_1(y_{b, \lambda}) \  \mathrm{I}'_f \  g_2(z_{p, \mu}).
\qedhere
\end{gather*}
\end{proof}

We omit the proof of the next proposition, as it follows easily from Construction $\mathfrak{M}$.  We consider chains in $\mathfrak{M}(\Gamma,\varphi)$ having points only of the ``line'' type.

\begin{proposition}
Let $(\Gamma,\varphi)$ be an incidence gain graph with underlying incidence structure $(\mathscr{P},\mathscr{B},\mathrm{I})$ and gain group acting on a nonempty set $\Lambda$.  Let $u_0,\dots,u_k \in \mathscr{P} \cup \mathscr{B}$ for some integer $k \geq 1$.

We have $(u_0,\dots,u_k)$ is a $k$-chain in $(\mathscr{P},\mathscr{B},\mathrm{I})$ if and only if \\ $(m_0,\dots,m_k)$ is a $k$-chain in $\mathfrak{M}(\Gamma,\varphi)$, where for each $i \in \{ 0,\dots,k \}$, $m_i = y_{u_i,\lambda_i}$ for some $\lambda_i \in \Lambda$ if $u_i \in \mathscr{B}$ and $m_i = z_{u_i,\lambda_i}$ for some $\lambda_i \in \Lambda$ if $u_i \in \mathscr{P}$.  Furthermore, if we have such $k$-chains, then $\lambda_k = \varphi_{w} \cdot \lambda_0$, where $$w = (u_0,e_1,u_1,e_2,\dots,u_{k-1},e_{k},u_{k})$$ is the corresponding walk in $\Gamma$.
\end{proposition}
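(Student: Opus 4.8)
The plan is to reduce the chain condition in $\mathfrak{M}(\Gamma,\varphi)$ to a single-edge analysis and then iterate. First I would observe that since incidence in $\mathfrak{M}(\Gamma,\varphi)$ relates only points of $X \cup Y$ to lines of $Z$, any chain must alternate between a point and a line; and the hypothesis that each $m_i$ equals $y_{u_i,\lambda_i}$ (when $u_i \in \mathscr{B}$) or $z_{u_i,\lambda_i}$ (when $u_i \in \mathscr{P}$) means the type of $m_i$ in $\mathfrak{M}(\Gamma,\varphi)$ is determined by, and opposite to, the type of $u_i$ in $(\mathscr{P},\mathscr{B},\mathrm{I})$. Hence the point/line alternation of $(m_0,\dots,m_k)$ is exactly the line/point alternation of $(u_0,\dots,u_k)$.

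Next I would analyze one consecutive pair $m_{i-1},m_i$, splitting into the two possible orientations. If $u_{i-1}=b\in\mathscr{B}$ and $u_i=p\in\mathscr{P}$, then $m_{i-1}=y_{b,\lambda_{i-1}}$ and $m_i=z_{p,\lambda_i}$, and by Construction $\mathfrak{M}$ their incidence holds iff $b\,\mathrm{I}\,p$ and $\lambda_i=\varphi(e_i)\cdot\lambda_{i-1}$ with $e_i=bp$; here the edge is oriented from $u_{i-1}$ to $u_i$, so $\delta_i=1$ and $\varphi(e_i)=\varphi(e_i)^{\delta_i}$. If instead $u_{i-1}=p\in\mathscr{P}$ and $u_i=b\in\mathscr{B}$, then $m_{i-1}=z_{p,\lambda_{i-1}}$ and $m_i=y_{b,\lambda_i}$, and incidence holds iff $b\,\mathrm{I}\,p$ and $\lambda_{i-1}=\varphi(e_i)\cdot\lambda_i$, i.e.\ $\lambda_i=\varphi(e_i)^{-1}\cdot\lambda_{i-1}$; here the edge is oriented from $u_i$ to $u_{i-1}$, so $\delta_i=-1$ and again $\varphi(e_i)^{-1}=\varphi(e_i)^{\delta_i}$. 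In both cases the single incidence $m_{i-1}\,\mathrm{I}'\,m_i$ is equivalent to the conjunction of $u_{i-1}\,\mathrm{I}\,u_i$ and the relation $\lambda_i=\varphi(e_i)^{\delta_i}\cdot\lambda_{i-1}$.

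With this equivalence in hand, the biconditional follows by quantifying over the $\lambda_i$. For the forward direction, given that $(u_0,\dots,u_k)$ is a chain I would fix $\lambda_0$ arbitrarily, define $\lambda_i:=\varphi(e_i)^{\delta_i}\cdot\lambda_{i-1}$ recursively, and conclude each $m_{i-1}\,\mathrm{I}'\,m_i$, so that $(m_0,\dots,m_k)$ is a chain. For the converse, a chain $(m_0,\dots,m_k)$ forces $u_{i-1}\,\mathrm{I}\,u_i$ for every $i$ simply by discarding the gain relation, yielding the chain $(u_0,\dots,u_k)$. Finally, for the \emph{furthermore} claim I would telescope the per-edge relations: from $\lambda_i=\varphi(e_i)^{\delta_i}\cdot\lambda_{i-1}$ for $i=1,\dots,k$ we obtain $\lambda_k=\varphi(e_k)^{\delta_k}\cdots\varphi(e_1)^{\delta_1}\cdot\lambda_0$, which is precisely $\varphi_w\cdot\lambda_0$ by the definition of the walk gain.

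I expect the only delicate point to be the orientation bookkeeping in the two cases above: because all edges are oriented line-to-point while a walk may traverse an edge in either direction, I must verify that the sign $\delta_i$ produced by the walk matches the inverse (or non-inverse) of $\varphi(e_i)$ forced by Construction $\mathfrak{M}$. The ``backwards'' left-multiplication convention used to define $\varphi_w$ is exactly what makes the recursion compose in the correct order, so the telescoping step is routine once the single-edge equivalence is set up correctly.
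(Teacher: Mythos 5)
Your argument is correct, and the orientation bookkeeping in the two single-edge cases (giving $\lambda_i=\varphi(e_i)^{\delta_i}\cdot\lambda_{i-1}$ in both) matches the paper's conventions exactly. The paper omits the proof of this proposition as routine, and your write-up is precisely the edge-by-edge verification and telescoping that the omitted proof would consist of.
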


From now on we assume that the underlying incidence structure is a linear space.

\begin{theorem*}
Let $(\Gamma,\varphi)$ be an incidence gain graph with underlying incidence structure $(\mathscr{P},\mathscr{B},\mathrm{I})$ a linear space, and gain group $G$ acting on a nonempty set $\Lambda$.  For each pair $(b,p)$ so that $b \in \mathscr{B}$ and $p \in \mathscr{P}$ with $b \ \cancel{\mathrm{I}} \ p$, we define a function
\begin{gather*}
\rho_{b,p} \  : \  \mathscr{P}_b \rightarrow G \\
\rho_{b,p}(q) = \varphi_w,
\end{gather*}
where $w = (b,e_1,q,e_2,b',e_3,p)$ is the walk in $\Gamma$ with $b'$ the unique line incident with $p$ and $q$.

For each triple $(b,p,\lambda)$ so that $b \in \mathscr{B}$ and $p \in \mathscr{P}$ with $b \ \cancel{\mathrm{I}} \ p$ and $\lambda \in \Lambda$, we define a function
\begin{gather*}
\rho_{b,p,\lambda} \  : \  \mathscr{P}_b \rightarrow \Lambda \\
\rho_{b,p,\lambda}(q) = \rho_{b,p}(q) \cdot \lambda.
\end{gather*}
\end{theorem*}

\begin{lemma}
Let $(\Gamma,\varphi)$ be an incidence gain graph with underlying incidence structure $(\mathscr{P},\mathscr{B},\mathrm{I})$ a linear space, and gain group acting on a nonempty set $\Lambda$.  Let $p,q \in \mathscr{P}$, $b \in \mathscr{B}$, and $\lambda, \mu \in \Lambda$.  
\begin{enumerate}
\item If $p=q$, then $d(x_p,z_{q,\lambda}) = 1$ and $(x_p,z_{q,\lambda})$ is the unique $1$-chain in $\mathfrak{M}(\Gamma,\varphi)$ from $x_p$ to $z_{q,\lambda}$.
\item If $p\neq q$, then $d(x_p,z_{q,\lambda}) = 3$ and there is a unique $3$-chain in $\mathfrak{M}(\Gamma,\varphi)$ from $x_p$ to $z_{q,\lambda}$.
\item If $b \  \mathrm{I} \  p$ and $\mu = \varphi(bp) \cdot \lambda$, then $d(y_{b,\lambda},z_{p,\mu}) = 1$ and $(y_{b,\lambda},z_{p,\mu})$ is the unique $1$-chain in $\mathfrak{M}(\Gamma,\varphi)$ from $y_{b,\lambda}$ to $z_{p,\mu}$.
\item If $b \  \mathrm{I} \  p$ and $\mu \neq \varphi(bp) \cdot \lambda$, then $d(y_{b,\lambda},z_{p,\mu}) = 3$ and there is a unique $3$-chain in $\mathfrak{M}(\Gamma,\varphi)$ from $y_{b,\lambda}$ to $z_{p,\mu}$.
\end{enumerate}
\end{lemma}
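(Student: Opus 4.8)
The plan is to dispatch parts (1) and (3) immediately from the definition of $\mathrm{I}'$ in Construction $\mathfrak{M}$, and to treat the two ``distance $3$'' claims (2) and (4) by a common template: bound the distance below, exhibit one $3$-chain, and prove it is the only one. For parts (1) and (3) I would simply read off the incidence relation. In part (1), $p=q$ gives $x_p \ \mathrm{I}' \ z_{p,\lambda}$ by the first clause of $\mathrm{I}'$; in part (3) the hypotheses $b \ \mathrm{I} \ p$ and $\mu = \varphi(bp)\cdot\lambda$ are exactly the condition of the second clause, so $y_{b,\lambda} \ \mathrm{I}' \ z_{p,\mu}$. In each case the two endpoints are an element of $\mathscr{P}'$ and an element of $\mathscr{B}'$, hence distinct, so the distance is not $0$; being incident it is $1$, and the only $1$-chain is the pair of endpoints itself.

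For parts (2) and (4) I would first observe that $\mathfrak{M}(\Gamma,\varphi)$ is bipartite with parts $\mathscr{P}'$ and $\mathscr{B}'$, so any chain from a point to a line alternates sides and has odd length; thus each distance in question is odd. Under the respective hypotheses $p \neq q$ and $\mu \neq \varphi(bp)\cdot\lambda$ the endpoints fail the incidence condition, so the distance is not $1$, hence at least $3$. It then suffices to produce a $3$-chain and prove uniqueness, and a general $3$-chain from a point to the target line has the shape $(\text{point}, z_{r,\sigma}, \text{point}, \text{target})$, so the analysis reduces to choosing the middle point (of type $x$ or of type $y$) and the scalars that $\mathrm{I}'$ then forces. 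In part (2) the middle point cannot be $x_p$, since that would force $p=q$; so it is $y_{b,\alpha}$ with $b \ \mathrm{I} \ p$ and $b \ \mathrm{I} \ q$, and because $p \neq q$ the linear space axiom supplies the unique line $b'$ through $p$ and $q$, forcing $b = b'$. The scalar $\alpha$ and the first-line parameter $\nu$ are then pinned down uniquely by $\lambda = \varphi(b'q)\cdot\alpha$ and $\nu = \varphi(b'p)\cdot\alpha$, using that each $\varphi(e)$ acts on $\Lambda$ as a bijection; this gives existence and uniqueness at once, so the distance is $3$.

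In part (4) I would split on the type of the middle point. If it is of type $x$, incidence with $z_{p,\mu}$ forces it to be $x_p$, and the first line is then $z_{p,\,\varphi(bp)\cdot\lambda}$, yielding the candidate $(y_{b,\lambda},\, z_{p,\varphi(bp)\cdot\lambda},\, x_p,\, z_{p,\mu})$, which is a genuine $3$-chain precisely because $\varphi(bp)\cdot\lambda \neq \mu$. If instead it is of type $y$, say $y_{c,\tau}$ with middle line $z_{r,\sigma}$, then $b$ and $c$ are both incident with $p$ and with $r$, so the linear space axiom forces either $r = p$ or $c = b$. In the case $c = b$, the two equations $\sigma = \varphi(br)\cdot\lambda$ and $\sigma = \varphi(br)\cdot\tau$ and bijectivity of the action give $\tau = \lambda$, whence $\mu = \varphi(bp)\cdot\lambda$; in the case $r = p$ the equations collapse to $\mu = \sigma = \varphi(bp)\cdot\lambda$. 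Either way we contradict the hypothesis, so the type-$y$ option is impossible and the $3$-chain is unique.

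I expect the main obstacle to be the uniqueness argument in part (4): eliminating the type-$y$ middle point requires combining the ``unique line through two distinct points'' axiom with the bijectivity of the group action on $\Lambda$ in order to cancel gain factors, and one must track carefully which point each intermediate line is anchored at so that the linear space axiom is applied to the correct pair of points. The analogous step in part (2) is cleaner because there the common line through $p$ and $q$ is immediately the unique one furnished by the axiom.
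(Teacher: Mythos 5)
Your proposal is correct and follows essentially the same route as the paper: parts (1) and (3) are read off from the definition of $\mathrm{I}'$, and parts (2) and (4) are handled by exhibiting the $3$-chain and showing that the linear space axiom together with the incidence rules of Construction $\mathfrak{M}$ force every parameter of a $3$-chain, with your type-$y$ middle-point elimination in part (4) being the same dichotomy the paper runs (its ``$p'\neq p$'' and ``third term of line type'' cases correspond to your ``$c=b$'' and ``$r=p$'' cases). The only differences are cosmetic: you order the case split by the type of the third term rather than the index of the second, and you make the parity/bipartiteness lower bound explicit where the paper leaves it implicit.
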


\begin{proof}
Items 1 and 3 are clear.

\textit{Item 2.}  We have the $3$-chain $(x_p, z_{p,\lambda_1}, y_{b',\lambda_2}, z_{q,\lambda})$, where $b'$ is the unique line incident with $p$ and $q$, and $\lambda_2 = \varphi(b'q)^{-1} \cdot \lambda$ and $\lambda_1 = \varphi(b'p) \cdot \lambda_2$.  This chain is unique since the second term in a $3$-chain from $x_p$ to $z_{q,\lambda}$ must be a line with first index $p$, and the third term must be a point of ``line'' type.  The $b'$ is uniquely determined as are $\lambda_1$ and $\lambda_2$.

\textit{Item 4.}  We have the $3$-chain $(y_{b,\lambda}, z_{p,\lambda_1}, x_p, z_{p,\mu})$, where $\lambda_1 = \varphi(bp) \cdot \lambda$.  We argue that this $3$-chain is unique.  Consider any $3$-chain from $y_{b,\lambda}$ to $z_{p,\mu}$.  If the second term in such $3$-chain has first index $p'$ for some $p \neq p'$, then the third term would have to be a point of ``line'' type.  But the first index on this ``line'' type point would have to be $b$, since $b$ is the unique line incident with $p$ and $p'$.  So we have a $3$-chain of the form $(y_{b,\lambda}, z_{p',\lambda_1}, y_{b,\lambda_2}, z_{p,\mu})$.  But then $\lambda = \lambda_2$, and the first and third terms in the $3$-chain are equal, a contradiction.

So the second term in such $3$-chain has first index $p$.  Suppose that the third term is a ``line'' type point.  So we have a $3$-chain of the form \\ $(y_{b,\lambda}, z_{p,\lambda_1}, y_{b',\lambda_2}, z_{p,\mu})$ for some $b'$.  But then $\mu = \lambda_1$, and the second and fourth terms in the $3$-chain are equal, a contradiction.
\end{proof}

\begin{lemma}
Let $(\Gamma,\varphi)$ be an incidence gain graph with underlying incidence structure $(\mathscr{P},\mathscr{B},\mathrm{I})$ a linear space, and gain group acting on a nonempty set $\Lambda$.  Let $\mathfrak{M}(\Gamma,\varphi) = (\mathscr{P}',\mathscr{B}',\mathrm{I}')$.  If $u,v \in \mathscr{P}' \cup \mathscr{B}'$ are either both points or are both lines, and if $d(u,v) \leq 2$, then there is a unique $2$-chain in $\mathfrak{M}(\Gamma,\varphi)$ from $u$ to $v$.
\end{lemma}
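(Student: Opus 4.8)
The plan is to fix same-kind vertices $u,v$ with $d(u,v)\le 2$ and establish uniqueness of the connecting $2$-chain by a case analysis on the types of $u$ and $v$. First I would separate the degenerate possibility $u=v$, which in fact must be excluded from the statement: for instance $(x_p,z_{p,\lambda},x_p)$ is a $2$-chain for every line $z_{p,\lambda}$ through $x_p$, so a repeated endpoint admits many $2$-chains. Reading the hypothesis as $u\ne v$, I note that distinct vertices of the same kind are at even positive distance, so $d(u,v)\le 2$ forces $d(u,v)=2$; a $2$-chain then exists by the definition of distance, and the whole content of the lemma is \emph{uniqueness}. Writing an arbitrary such chain as $(u,m,v)$, with $m$ a line when $u,v$ are points and a point when $u,v$ are lines, I must show that both indices of $m$ are forced. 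The three tools I would use are: (i) the incidence rules of Construction $\mathfrak{M}$, namely $x_p\,\mathrm{I}'\,z_{r,\nu}$ iff $r=p$, and $y_{b,\lambda}\,\mathrm{I}'\,z_{r,\nu}$ iff $b\,\mathrm{I}\,r$ and $\nu=\varphi(br)\cdot\lambda$; (ii) the linear-space facts that two distinct points lie on a unique common line and two distinct lines share at most one point; and (iii) the fact that each group element acts on $\Lambda$ as a bijection, so $\nu=\varphi(e)\cdot\lambda$ determines $\lambda$ from $\nu$ and conversely.

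For $u=z_{p,\lambda}$ and $v=z_{p',\lambda'}$ the middle vertex $m$ is a point. If $p=p'$ then $\lambda\ne\lambda'$, an $x$-type middle vertex is forced to be $x_p$ (from $r=p$), while a $y$-type vertex $y_{c,\kappa}$ would require $\lambda=\varphi(cp)\cdot\kappa=\lambda'$, which is impossible; hence $m=x_p$ is the only option. If $p\ne p'$, no $x$-type vertex can meet both lines (that would force $p=p'$), so $m=y_{c,\kappa}$ with $c$ incident to both $p$ and $p'$; by (ii) the line $c$ is the unique line through $p$ and $p'$, and then $\kappa=\varphi(cp)^{-1}\cdot\lambda$ is forced by (iii). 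In both subcases $m$ is unique.

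For $u,v$ points the middle vertex is a line $z_{r,\nu}$. If both are $x$-type, $u=x_p$ and $v=x_{p'}$, incidence forces $r=p=p'$, i.e. $u=v$, so this subcase cannot arise under $u\ne v$ and $d\le 2$. If $u=x_p$ and $v=y_{b,\lambda}$, incidence with $x_p$ forces $r=p$ and incidence with $y_{b,\lambda}$ forces $b\,\mathrm{I}\,p$ and $\nu=\varphi(bp)\cdot\lambda$, so $m$ is unique. If both are $y$-type, $u=y_{b,\lambda}$ and $v=y_{b',\lambda'}$, then $r$ is incident to both $b$ and $b'$; necessarily $b\ne b'$, since $b=b'$ together with a valid chain would force $\lambda=\lambda'$ and hence $u=v$; so by (ii) the point $r$ is the unique common point of $b$ and $b'$, and $\nu=\varphi(br)\cdot\lambda$ is then forced.

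The steps I expect to be the crux are precisely the two in which the first index of $m$ is a genuine point or line of the underlying linear space — the $y$--$y$ point case and the $p\ne p'$ line case — because there uniqueness rests on the linear-space incidence axioms rather than on coordinate bookkeeping; once the combinatorial index is pinned down, the $\Lambda$-coordinate always follows from the bijectivity of the action. The remaining care is organizational: ruling the degenerate $u=v$ out of the statement, and confirming in each case that existence is already guaranteed by $d(u,v)=2$ so that only uniqueness is at issue.
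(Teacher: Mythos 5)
Your proposal is correct and follows essentially the same route as the paper: the same case split on the types of $u$ and $v$, with the middle vertex's combinatorial index pinned down by the linear-space axioms and its $\Lambda$-index by the bijectivity of the group action. Your explicit remark that the degenerate case $u=v$ must be read out of the statement (the paper's Case 2 only notes $d(u,v)=0$ and moves on) is a fair and slightly more careful observation, but it does not change the substance of the argument.
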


\begin{proof}
\textit{Case 1.}  Suppose $u = z_{p,\lambda}$ and $v = z_{q,\mu}$ for some $p,q \in \mathscr{P}$ and $\lambda,\mu \in \Lambda$.  

If $p=q$ and $\lambda \neq \mu$, then $(z_{p,\lambda}, x_p, z_{q,\mu})$ is a $2$-chain, and if we have a $2$-chain from $z_{p,\lambda}$ to $z_{q,\mu}$ where the second term is a ``line'' type point, then $\lambda = \mu$, a contradiction.  And so this is the unique $2$-chain from $u$ to $v$.  

If $p \neq q$, then we have a $2$-chain $(z_{p,\lambda}, y_{b,\lambda_1}, z_{q,\mu})$ where $b$ is the unique line incident with $p$ and $q$, and where $\lambda_1 = \varphi(bp)^{-1} \cdot \lambda = \varphi(bq)^{-1} \cdot \mu$, and so $\lambda = \mu$ and this is the unique $2$-chain from $u$ to $v$.  

\textit{Case 2.}  If $u = x_p$ and $v =x_q$ for some $p,q \in \mathscr{P}$, then $p=q$ and $d(u,v) = 0$.

\textit{Case 3.} Suppose $u = x_p$ and $v = y_{b,\lambda}$ for some $p \in \mathscr{P}$, $b \in \mathscr{B}$, and $\lambda \in \Lambda$.  Then $(x_p, z_{p,\mu}, y_{b,\lambda})$ is the unique $2$-chain from $u$ to $v$, where $\mu = \varphi(bp) \cdot \lambda$.

\textit{Case 4.} Suppose $u = y_{b,\lambda}$ and $v = y_{b',\mu}$ for some $b,b' \in \mathscr{B}$ and $\lambda, \mu \in \Lambda$.  We have a $2$-chain $(y_{b,\lambda},z_{p,\lambda_1}, y_{b',\mu})$ where $b$ and $b'$ are incident with a common point $p$, and $\lambda_1 = \varphi(bp) \cdot \lambda = \varphi(b'p) \cdot \mu$.  This $2$-chain is unique as $(\mathscr{P},\mathscr{B},\mathrm{I})$ a linear space and hence a common point $p$ is unique.
\end{proof}

We now come to the main theorem of this section, which describes the situation when $\mathfrak{M}(\Gamma,\varphi)$ is a generalized quadrangle.

\begin{theorem}
Let $(\Gamma,\varphi)$ be an incidence gain graph with underlying incidence structure $(\mathscr{P},\mathscr{B},\mathrm{I})$ a linear space, and gain group acting on a nonempty set $\Lambda$.  We have $\mathfrak{M}(\Gamma,\varphi)$ is a generalized quadrangle if and only if for every $p \in \mathscr{P}$ and $b \in \mathscr{B}$ with $p \  \cancel{\mathrm{I}} \  b$, and for every $\lambda \in \Lambda$, the rho function $\rho_{b,p,\lambda}$ is bijective.  When $\mathfrak{M}(\Gamma,\varphi)$ is a generalized quadrangle we have the following.
\begin{enumerate}
\item The set $X = \{ x_p \  | \  p \in \mathscr{P} \}$ is an ovoid of $\mathfrak{M}(\Gamma,\varphi)$.
\item For every $b \in \mathscr{B}$, the set $\mathscr{P}_b$ has the same cardinality as $\Lambda$.  Hence $(\mathscr{P},\mathscr{B},\mathrm{I})$ is a Steiner system.
\item If $\mathscr{P}$ is finite, then $\Lambda$ is finite, and $|\mathscr{P}| = v \geq 3$, $|\Lambda| = k \geq 2$, and $\mathfrak{M}(\Gamma,\varphi)$ has that every line is incident with exactly $1+s$ points and every point is incident with exactly $1+t$ lines, where $\displaystyle s=\frac{v-1}{k-1}$ and $t=k-1$.
\end{enumerate}
\end{theorem}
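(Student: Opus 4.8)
The plan is to verify the two generalized $n$-gon axioms with $n=4$ by classifying every pair $(u,v)$ of elements of $\mathfrak{M}(\Gamma,\varphi)$ by the types $x_p,\ y_{b,\lambda},\ z_{p,\lambda}$. The distance lemma already resolves every point--line pair except the family $\{\,y_{b,\lambda},\,z_{p,\mu}\,\}$ with $b\,\cancel{\mathrm{I}}\,p$, and the $2$-chain lemma resolves uniqueness for every like-type pair that happens to lie at distance $2$. Thus the entire theorem is governed by this one remaining family, and $\rho_{b,p,\lambda}$ is precisely the object that measures it.

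The heart of the argument is a correspondence between $3$-chains and fibers of the rho function. Given $b\,\cancel{\mathrm{I}}\,p$, I would take any $3$-chain from $y_{b,\lambda}$ to $z_{p,\mu}$ and show its interior is forced: the second term must be $z_{q,\nu}$ with $q\in\mathscr{P}_b$ and $\nu=\varphi(bq)\cdot\lambda$; the third term cannot be $x_q$ (that would force $q=p$, impossible since $q\,\mathrm{I}\,b$ while $b\,\cancel{\mathrm{I}}\,p$), so it is $y_{b',\xi}$ with $b'$ the unique line on $p$ and $q$. Reading the left action backwards along $w=(b,e_1,q,e_2,b',e_3,p)$ with all edges oriented line-to-point gives $\varphi_w=\varphi(b'p)\varphi(b'q)^{-1}\varphi(bq)=\rho_{b,p}(q)$, and the chain closes exactly when $\mu=\rho_{b,p}(q)\cdot\lambda=\rho_{b,p,\lambda}(q)$. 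Since distinct $q$ yield distinct chains, the number of $3$-chains from $y_{b,\lambda}$ to $z_{p,\mu}$ equals $|\rho_{b,p,\lambda}^{-1}(\mu)|$; hence existence for all $\mu$ is surjectivity and uniqueness is injectivity of $\rho_{b,p,\lambda}$. I expect this to be the main obstacle: getting the orientation signs and the backward multiplication to reproduce $\rho_{b,p}$ exactly, and making sure no $3$-chain of a different shape (through some $x_q$, or revisiting $b$) is overlooked.

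With the correspondence established, both directions follow. For the reverse direction, if $\mathfrak{M}(\Gamma,\varphi)$ is a generalized quadrangle then for $b\,\cancel{\mathrm{I}}\,p$ the opposite-type, non-incident pair $y_{b,\lambda},\,z_{p,\mu}$ lies at distance exactly $3$ (diameter $\le 4$, odd, and not $1$) with a unique $3$-chain (Axiom 2, since $3<4$); applying the correspondence for every $\mu$ forces each $\rho_{b,p,\lambda}$ to be a bijection. For the forward direction, assume all rho functions are bijective. Surjectivity places the remaining family at distance $3$, so with the distance lemma every point--line pair is at distance $\le 3$; the triangle inequality then gives Axiom 1, since for like-type $u,v$ we have $d(u,v)\le d(u,w)+1\le 4$, where $w$ is an opposite-type element incident with $v$ (a line on $v$ if $v$ is a point, else $x_p$ when $v=z_{p,\lambda}$), which exists because $\mathscr{P}_b\neq\emptyset$ for every line. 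For Axiom 2, distance $1$ is automatic, distance $2$ (necessarily like-type) is the $2$-chain lemma, and distance $3$ (necessarily opposite-type) is the distance lemma together with injectivity of the rho functions; like-type pairs such as $x_p,x_q$ with $p\neq q$ sit at the allowed distance $4$. Hence $\mathfrak{M}(\Gamma,\varphi)$ is a generalized quadrangle.

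It remains to read off the three consequences. Item 1 is immediate from the construction: $x_{p'}\,\mathrm{I}'\,z_{p,\lambda}$ holds iff $p'=p$, so every line meets $X$ in exactly one point, making $X$ an ovoid. For item 2, the linear-space axioms give, for each $b$, a point $p$ with $p\,\cancel{\mathrm{I}}\,b$ (if every point lay on $b$, the $\ge 2$ points of the line $b_0$ from Axiom 3 would force $b=b_0$, contradicting $p_0\,\cancel{\mathrm{I}}\,b_0$); then the bijection $\rho_{b,p,\lambda}:\mathscr{P}_b\to\Lambda$ yields $|\mathscr{P}_b|=|\Lambda|$ for all $b$, so $(\mathscr{P},\mathscr{B},\mathrm{I})$ is a Steiner system $S(2,v,k)$ with $k=|\Lambda|$. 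For item 3, finiteness of $\Lambda$ follows from $|\Lambda|=|\mathscr{P}_b|\le|\mathscr{P}|$, the bounds $v\ge 3$ and $k\ge 2$ come from the linear-space axioms, and the degrees are a direct count: each line $z_{p,\lambda}$ carries $x_p$ together with one point $y_{b,\xi}$ for each of the $\frac{v-1}{k-1}$ lines $b$ on $p$, giving $1+s$ points with $s=\frac{v-1}{k-1}$, while each point ($x_p$ or $y_{b,\lambda}$) lies on exactly $|\Lambda|=k=1+t$ lines, so $t=k-1$.
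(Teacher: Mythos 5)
Your proposal is correct and follows essentially the same route as the paper: Lemmas 3 and 4 handle all pairs except $y_{b,\lambda},z_{p,\mu}$ with $b\ \cancel{\mathrm{I}}\ p$, and that remaining family is resolved by identifying $3$-chains with choices of $q\in\mathscr{P}_b$ satisfying $\rho_{b,p,\lambda}(q)=\mu$, so that surjectivity gives existence and injectivity gives uniqueness; the converse and the three consequences are argued the same way. Your packaging of this as ``number of $3$-chains equals $|\rho_{b,p,\lambda}^{-1}(\mu)|$,'' and your explicit justification that every line misses some point (needed for Item 2), are minor refinements of the paper's argument rather than a different approach.
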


\begin{proof}
Suppose first that for every $p \in \mathscr{P}$ and $b \in \mathscr{B}$ with $p \  \cancel{\mathrm{I}} \  b$, and for every $\lambda \in \Lambda$, the rho function $\rho_{b,p,\lambda}$ is bijective.  We work to prove that $\mathfrak{M}(\Gamma,\varphi)$ is a generalized quadrangle (a generalized $n$-gon for $n=4$).

Lemma 3 establishes that if $m_1$ is a point and $m_2$ is a line of $\mathfrak{M}(\Gamma,\varphi)$, then $d(m_1,m_2) \leq 3$, with the exception of one case.  We will take care of the missing case at the end of the proof of this direction.  Let us assume for the moment that $d(m_1,m_2) \leq 3$ whenever $m_1$ is a point and $m_2$ is a line of $\mathfrak{M}(\Gamma,\varphi)$.  There is a simple argument to show that the distance between any objects of the same type is less than or equal to four.  Suppose $n_1,n_2$ are either both points or are both lines of $\mathfrak{M}(\Gamma,\varphi)$.  Let $n'$ be an object of $\mathfrak{M}(\Gamma,\varphi)$ that is incident with $n_2$ (by Construction $\mathfrak{M}$ we have no isolated objects in $\mathfrak{M}(\Gamma,\varphi)$).  Then $d(n_1,n') \leq 3$, and so $d(n_1,n_2) \leq 4$.

And so pending this missing case, we have $d(u,v) \leq 4$ for all objects $u,v$ of $\mathfrak{M}(\Gamma,\varphi)$.  Also, pending the missing case of Lemma 3, we have by Lemma 3 and Lemma 4 that if $d(u,v) < 4$ then there is a unique chain from $u$ to $v$.

We are left to establish the missing case.  We show that if $b \in \mathscr{B}$ and $p \in \mathscr{P}$ and $b \  \cancel{\mathrm{I}} \  p$, and if $\lambda, \mu \in \Lambda$, then $d(y_{b,\lambda},z_{p,\mu}) = 3$ and that there is a unique $3$-chain in $\mathfrak{M}(\Gamma,\varphi)$ from $y_{b,\lambda}$ to $z_{p,\mu}$.

Consider the rho function $\rho_{b,p,\lambda}$.  Since $\rho_{b,p,\lambda}$ is surjective, there exists $q \ \mathrm{I} \ b$ so that $\rho_{b,p,\lambda}(q) = \mu$.  In other words, if $b'$ is the unique line incident with $p$ and $q$, and if $w = (b,e_1,q,e_2,b',e_2,p)$ is the walk in $\Gamma$, then $$\mu = \varphi_w \cdot \lambda = \varphi(e_3)\varphi(e_2)^{-1} \varphi(e_1) \cdot \lambda.$$

Hence we have the $3$-chain $$(y_{b,\lambda}, z_{q,\lambda_1}, y_{b',\lambda_2}, z_{p,\mu}),$$ where $\lambda_1 = \varphi(e_1) \cdot \lambda$, $\lambda_2 = \varphi(e_2)^{-1} \cdot \lambda_1$, and $\mu = \varphi(e_3) \cdot \lambda_2 = \varphi_w \cdot \lambda$.

We now argue that this is the unique $3$-chain from $y_{b,\lambda}$ to $z_{p,\mu}$.  Note that any $3$-chain from $y_{b,\lambda}$ to $z_{p,\mu}$ has that the second term has first index $q'$ for some $q' \  \mathrm{I} \  b$, and the third term has first index $b''$ where $b''$ is the unique line incident with $p$ and $q'$.  If $$(y_{b,\lambda}, z_{q',\lambda'_1}, y_{b'',\lambda'_2}, z_{p,\mu})$$ 
is any $3$-chain from $y_{b,\lambda}$ to $z_{p,\mu}$, then since $\rho_{b,p,\lambda}$ is injective, and since $\mu = \varphi_w \cdot \lambda = \varphi_{w'} \cdot \lambda$ where $w'$ is the walk in $\Gamma$ associated with $q'$ and $b''$, it follows that $q = q'$.  And so $b' = b''$, and so $\lambda_1 = \lambda_1'$ and $\lambda_2 = \lambda_2'$.  Hence the $3$-chain  $$(y_{b,\lambda}, z_{q,\lambda_1}, y_{b',\lambda_2}, z_{p,\mu})$$
is the unique one from $y_{b,\lambda}$ to $z_{p,\mu}$.  Thus we have established that $\mathfrak{M}(\Gamma,\varphi)$ is a generalized quadrangle.

Conversely, suppose $\mathfrak{M}(\Gamma,\varphi)$ is a generalized quadrangle.  Let $p \in \mathscr{P}$ and $b \in \mathscr{B}$ with $p \  \cancel{\mathrm{I}} \  b$, and let $\lambda \in \Lambda$.  We argue that $\rho_{b,p,\lambda}$ is bijective.  

Let $\mu \in \Lambda$ be arbitrary.  Since $\mathfrak{M}(\Gamma,\varphi)$ is a generalized quadrangle, $d(y_{b,\lambda},z_{p,\mu}) = 3$ (it cannot be one since $b \  \cancel{\mathrm{I}} \  p$).  And so we have a $3$-chain $(y_{b,\lambda},z_{q,\lambda_1},y_{b',\lambda_2},z_{p,\mu})$.  Hence there exists $q \ \mathrm{I} \ b$ so that $\rho_{b,p,\lambda}(q) = \mu$.  And since $\mathfrak{M}(\Gamma,\varphi)$ is a generalized quadrangle, this is the unique chain from $y_{b,\lambda}$ to $z_{p,\mu}$, and hence there is a unique $q \ \mathrm{I} \ b$, so that $\rho_{b,p,\lambda}(q) = \mu$.  Thus $\rho_{b,p,\lambda}$ is bijective.
 
Suppose $\mathfrak{M}(\Gamma,\varphi)$ is a generalized quadrangle.

\textit{Item 1.}  It is clear from Construction $\mathfrak{M}$ that the set $X$ is an ovoid of $\mathfrak{M}(\Gamma,\varphi)$.

\textit{Item 2.}  Since for every $b \in \mathscr{B}$, $\rho_{b,p,\lambda}$ is a bijection from $\mathscr{P}_b$ to $\Lambda$, it follows that for every $b \in \mathscr{B}$, $\mathscr{P}_b$ and $\Lambda$ have the same cardinality.

\textit{Item 3.}  As $(\mathscr{P},\mathscr{B},\mathrm{I})$ is a linear space, we have a point and a line not incident, and every line is incident with at least two points, and so $|\mathscr{P}| \geq 3$.  Since $\mathscr{P}$ is finite, $\mathscr{P}_b$ is finite for every line $b$, and so $\Lambda$ is finite with $|\Lambda| =k \geq 2$.  We have $(\mathscr{P},\mathscr{B},\mathrm{I})$ a Steiner system, and so every point of $(\mathscr{P},\mathscr{B},\mathrm{I})$ is incident with exactly $\displaystyle r =  \frac{v-1}{k-1}$ lines.   It follows by the construction of $\mathfrak{M}(\Gamma,\varphi)$ that every line $z_{p,\mu}$ is incident with exactly $r+1$ points $x_p,y_{b_1,\lambda_1}, \dots, y_{b_r,\lambda_r}$ where $\lambda_i = \varphi(b_i p )^{-1} \cdot \mu$ for each $i \in \{1,\dots,r\}$.  Also, in $\mathfrak{M}(\Gamma,\varphi)$, we have every point of the form $y_{b,\lambda}$ incident with exactly $k$ lines of the form $z_{q_1,\mu_1}, \dots, z_{q_k,\mu_k}$ where $\mu_i = \varphi(bq_i) \cdot \lambda$ for each $i \in \{1,\dots,k\}$.  It is clear by the construction of $\mathfrak{M}(\Gamma,\varphi)$ that a point of type $x_p$ is incident with exactly $|\Lambda| = k$ lines.  
\end{proof}

The result of Theorem 5 is a special case of the result found in \cite{Mcc24}.  We note that in \cite{Mcc24} a construction is obtained from a block design.  Our Construction $\mathfrak{M}$ requires only an incidence gain graph.

In the affine plane example in the next section, the action of $G$ on $\Lambda$ is regular, and so we have the following. 

\begin{proposition}
Let $(\Gamma,\varphi)$ be an incidence gain graph with underlying incidence structure $(\mathscr{P},\mathscr{B},\mathrm{I})$ a linear space, and gain group $G$ acting regularly on a nonempty set $\Lambda$.  Let $p \in \mathscr{P}$ and $b \in \mathscr{B}$ with $p \  \cancel{\mathrm{I}} \  b$, and let $\lambda \in \Lambda$.  We have $\rho_{b,p,\lambda}$ is bijective if and only if $\rho_{b,p}$ is bijective.
\end{proposition}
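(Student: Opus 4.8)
The plan is to recognize $\rho_{b,p,\lambda}$ as the composite of $\rho_{b,p}$ with the ``orbit map'' determined by $\lambda$, and then to exploit regularity to show that this orbit map is a bijection. Accordingly, I would define $\theta_\lambda : G \rightarrow \Lambda$ by $\theta_\lambda(g) = g \cdot \lambda$. Directly from the defining relation $\rho_{b,p,\lambda}(q) = \rho_{b,p}(q) \cdot \lambda$ in the Rho Function statement, we have $\rho_{b,p,\lambda} = \theta_\lambda \circ \rho_{b,p}$ as maps with domain $\mathscr{P}_b$. So the entire question reduces to the behavior of composition with $\theta_\lambda$.

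First I would verify that $\theta_\lambda$ is a bijection of $G$ onto $\Lambda$, using both halves of the regularity hypothesis. Surjectivity of $\theta_\lambda$ is precisely transitivity: for any $\mu \in \Lambda$ there is $g \in G$ with $g \cdot \lambda = \mu$. For injectivity, suppose $\theta_\lambda(g) = \theta_\lambda(h)$, that is $g \cdot \lambda = h \cdot \lambda$; applying $h^{-1}$ to both sides gives $(h^{-1}g) \cdot \lambda = \lambda$, and freeness forces $h^{-1}g$ to be the identity of $G$, hence $g = h$. Thus $\theta_\lambda$ is a bijection.

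With $\theta_\lambda$ a bijection in hand, the conclusion follows from the elementary fact that post-composition with a bijection preserves each of injectivity and surjectivity. Since $\theta_\lambda$ is injective, $\rho_{b,p,\lambda} = \theta_\lambda \circ \rho_{b,p}$ is injective if and only if $\rho_{b,p}$ is injective. Since $\theta_\lambda$ is a bijection, the image of $\rho_{b,p,\lambda}$ is $\theta_\lambda$ applied to the image of $\rho_{b,p}$, so $\rho_{b,p,\lambda}$ exhausts $\Lambda$ if and only if $\rho_{b,p}$ exhausts $G$. Combining these two equivalences yields that $\rho_{b,p,\lambda}$ is bijective if and only if $\rho_{b,p}$ is bijective.

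There is no substantial obstacle in this argument; the only point requiring genuine care is the verification that $\theta_\lambda$ is a bijection, where one must invoke \emph{both} freeness and transitivity. Transitivity alone delivers surjectivity, but injectivity of $\theta_\lambda$ truly relies on freeness, so the regularity hypothesis is used in full. Everything downstream is formal manipulation of function composition.
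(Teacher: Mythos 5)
Your proof is correct and is essentially the same argument as the paper's: both reduce the claim to the fact that, under a regular action, the orbit map $g \mapsto g \cdot \lambda$ is a bijection from $G$ to $\Lambda$ (freeness giving injectivity, transitivity giving surjectivity). The paper carries this out element-by-element for injectivity and surjectivity separately, whereas you package it slightly more cleanly as post-composition with the bijection $\theta_\lambda$, but the content is identical.
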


\begin{proof}
As the action is free, 
\begin{gather*}
q_1 = q_2 \Longrightarrow \rho_{b,p}(q_1) \cdot \lambda = \rho_{b,p}(q_2) \cdot \lambda \\
\text{if and only if} \\
q_1 = q_2 \Longrightarrow \rho_{b,p}(q_1) = \rho_{b,p}(q_2).
\end{gather*}
Thus $\rho_{b,p,\lambda}$ is injective if and only if $\rho_{b,p}$ is injective.

We now show that 
\begin{gather*}
\text{for every } \mu \in \Lambda \text{ there exists } q \ \mathrm{I} \ b \text{ so that } \rho_{b,p}(q) \cdot \lambda = \mu \\
\text{if and only if} \\
\text{for every } g \in G \text{ there exists } q \ \mathrm{I} \ b \text{ so that } \rho_{b,p}(q) = g.
\end{gather*}
Suppose first that $$\text{for every } g \in G \text{ there exists } q \ \mathrm{I} \ b \text{ so that } \rho_{b,p}(q) = g,$$ and let $\mu \in \Lambda$ be arbitrary.  As the action is transitive, there exists $g \in G$ so that $\mu = g \cdot \lambda$.  And so there exists $q$ so that $\rho_{b,p}(q) = g$, and hence $\rho_{b,p}(q) \cdot \lambda = g \cdot \lambda = \mu$.  Conversely, suppose that  $$\text{for every } \mu \in \Lambda \text{ there exists } q \ \mathrm{I} \ b \text{ so that } \rho_{b,p}(q) \cdot \lambda = \mu,$$ and let $g \in G$ be arbitrary.  Let $\mu = g \cdot \lambda$.  And so there exists $q$ so that $\rho_{b,p}(q) \cdot \lambda = \mu = g \cdot \lambda$.  Since the action is free, $\rho_{b,p}(q)  = g $.

Thus $\rho_{b,p,\lambda}$ is surjective if and only if $\rho_{b,p}$ is surjective.
\end{proof}

\section{Affine Plane Examples and Further Inquiry}

Let $\mathbb{F}$ be a field.  An \textit{affine plane over $\mathbb{F}$} is an incidence structure $(\mathscr{P},\mathscr{B},\mathrm{I})$ where the point set $\mathscr{P} = \mathbb{F} \times \mathbb{F}$, and line set $\mathscr{B}$ consists of lines of two types.  A \textit{vertical line}, $L_b$,  we index by $b \in \mathbb{F}$ which is the \textit{$x$-intercept}.  A \textit{non-vertical line}, $L_{m,b}$, we index by $m \in \mathbb{F}$ which is the \textit{slope} and $b \in \mathbb{F}$ which is the \textit{$y$-intercept}.  The incidence $\mathrm{I}$ is given by $(b,y) \  \mathrm{I} \  L_b$ for all $y \in \mathbb{F}$ and $(x,mx+b) \  \mathrm{I} \  L_{m,b}$ for all $x \in \mathbb{F}$.

\begin{theorem}
Suppose $(\mathscr{P},\mathscr{B},\mathrm{I})$ is an affine plane over a field $\mathbb{F}$ and let $\mathbb{F}$ act on itself via addition.  Let $\Gamma = (V,E)$ be the incidence graph of the affine plane, and let us define a gain function $\varphi : \mathscr{F}(E) \rightarrow (\mathbb{F},+)$ as follows:

$$\varphi(e) = 
    \begin{cases}
        -by, & \text{if } e = \{ L_b, (b,y) \}, \\
        xb, & \text{if } e = \{ L_{m,b} , (x,mx+b) \}.
    \end{cases}$$

We have $\mathfrak{M}(\Gamma,\varphi)$ a generalized quadrangle.
\end{theorem}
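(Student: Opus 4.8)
The plan is to reduce the statement to the bijectivity criterion of Theorem 5 and then verify that criterion by a direct coordinate computation of the rho function. First I would record that an affine plane over $\mathbb{F}$ is a linear space: two distinct points lie on a unique line, every line carries at least two points since $|\mathbb{F}| \geq 2$, and there exist non-incident point--line pairs. Hence the results of the previous section apply. Crucially, $(\mathbb{F},+)$ acting on itself by addition is a regular action, so Proposition 6 allows me to replace the requirement that $\rho_{b,p,\lambda}$ be bijective for \emph{all} $\lambda$ by the single requirement that $\rho_{b,p}$ be bijective. Thus, by Theorem 5 together with Proposition 6, it suffices to prove that for every point $p$ and every line $b$ with $p \ \cancel{\mathrm{I}} \ b$, the map $\rho_{b,p} : \mathscr{P}_b \to (\mathbb{F},+)$ is a bijection.

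Next I would set up the gain computation once. Fix $p = (x_0,y_0)$, take $q \in \mathscr{P}_b$, and let $b'$ be the unique line through $p$ and $q$, so the walk is $w = (b,e_1,q,e_2,b',e_3,p)$. Since all edges are oriented from line to point, the signs are $\delta_1 = 1$, $\delta_2 = -1$, $\delta_3 = 1$, and therefore in the additive gain group
$$\rho_{b,p}(q) = \varphi_w = \varphi(e_3) - \varphi(e_2) + \varphi(e_1).$$
I then split on the type of $b$. If $b = L_c$ is vertical, non-incidence means $x_0 \neq c$; parametrizing $\mathscr{P}_b = \{(c,y) : y \in \mathbb{F}\}$, each $b'$ is the non-vertical line through $p$ and $(c,y)$, and substituting its slope and $y$-intercept into the displayed formula I expect (after routine cancellation) the clean expression $\rho_{b,p}(q) = (x_0 - c)y - c\,y_0$. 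If $b = L_{m_0,c_0}$ is non-vertical, non-incidence means $y_0 \neq m_0 x_0 + c_0$; parametrizing $\mathscr{P}_b = \{(x, m_0 x + c_0) : x \in \mathbb{F}\}$, the same substitution should yield $\rho_{b,p}(q) = (m_0 x_0 + c_0 - y_0)\,x + c_0 x_0$. In each case the map is an affine function of the single free parameter with leading coefficient equal (up to sign) to the non-incidence discrepancy, hence nonzero; an affine map of $\mathbb{F}$ with nonzero slope is a bijection, so $\rho_{b,p}$ is bijective in both cases.

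The main obstacle I anticipate is bookkeeping rather than conceptual: I must track the edge orientations and the resulting signs correctly, and in the non-vertical case I must handle the degenerate sub-situation where $q = (x_0, m_0 x_0 + c_0)$ shares the $x$-coordinate of $p$, so that $b'$ is the \emph{vertical} line $L_{x_0}$ rather than a line of the form $L_{m',d'}$. For this value of the parameter the gains $\varphi(e_2),\varphi(e_3)$ come from the vertical-line branch of $\varphi$, and I would verify separately that the value of $\rho_{b,p}$ computed there agrees with the affine formula evaluated at $x = x_0$; this consistency check is exactly what guarantees $\rho_{b,p}$ is the single affine map on all of $\mathscr{P}_b$ with no exceptional point. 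Once the affine form and its nonvanishing slope are confirmed in both line types, bijectivity is immediate and the theorem follows.
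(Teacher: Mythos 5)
Your proposal is correct and follows essentially the same route as the paper: reduce to bijectivity of $\rho_{L,p}$ via Theorem 5 and Proposition 6, split on whether $L$ is vertical, compute the gain sum $\varphi(e_3)-\varphi(e_2)+\varphi(e_1)$ in coordinates (your closed forms $(x_0-c)y - cy_0$ and $(m_0x_0+c_0-y_0)x + c_0x_0$ agree with the paper's simplifications), and treat the degenerate sub-case where the connecting line $b'$ is vertical. The only difference is presentational: you read off bijectivity from the affine form having nonzero leading coefficient, whereas the paper verifies injectivity and surjectivity as two separate computations.
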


\begin{proof}
Since the additive action of $\mathbb{F}$ on itself is regular, it follows from Theorem 5 and Proposition 6 that it suffices to show that for every $p \in \mathscr{P}$ and $L \in \mathscr{B}$ so that $p \ \cancel{\mathrm{I}} \ L$, $\rho_{L,p}$ is bijective. 

\textit{Proof of the injectivity of $\rho_{L,p}$.}

Suppose $L \in \mathscr{B}$ and $p \in \mathscr{P}$ so that $p \ \cancel{\mathrm{I}} \ L$.  Let $p_1,p_2$ be distinct points incident with $L$, and let $M_1$ be the unique line incident with $p$ and $p_1$ and let $M_2$ be the unique line incident with $p$ and $p_2$.  Let $w_1 = (L,e_1,p_1,e_2,M_1,e_3,p)$ and let $w_2 = (L,e_6,p_2,e_5,M_2,e_4,p)$ be the corresponding walks in $\Gamma$.  We need to show that $\rho_{L,p}(p_1) \neq \rho_{L,p}(p_2)$.

\textit{Case 1.}  Suppose that $L = L_b$ is a vertical line.  Let $p_1 = (b,y_1)$, $p_2 = (b,y_2)$, and $p = (x,y)$.  And so $x \neq b$ and $y_1 \neq y_2$.

Now $M_1 = L_{m_1,b_1}$ with slope $m_1 = (y-y_1)(x-b)^{-1}$ and $y$-intercept $b_1 = y_1 - m_1 b,$ and $M_2 = L_{m_2,b_2}$ with slope $m_2 = (y-y_2)(x-b)^{-1}$ and $y$-intercept $b_2 = y_2 - m_2 b.$

Hence we need to show that 
\begin{gather}
\nonumber \varphi(e_3) - \varphi(e_2) + \varphi(e_1) \neq  \varphi(e_4) - \varphi(e_5) + \varphi(e_6), \text{  i.e.  } \\
xb_1 - bb_1 - by_1 \neq xb_2 - bb_2 - by_2.
\end{gather}

Substituting for $b_1$, the left hand side of (1) becomes
\begin{gather*}
x(y_1 - (y-y_1)(x-b)^{-1}b) - b(y_1 - (y-y_1)(x-b)^{-1}b) - by_1 = \\
(x-b)y_1 - (x-b)(y-y_1)(x-b)^{-1}b - by_1 = \\
(x-b)y_1 - (y-y_1)b - by_1 = \\
xy_1 - yb - by_1.
\end{gather*}

Similarly, the right hand side of (1) becomes
$$xy_2 - yb - by_2.$$

Hence we need to show that
$$y_1(x - b) \neq y_2(x-b),$$
and this is true since $x \neq b$ and $y_1 \neq y_2$.

\textit{Case 2.}  Suppose that $L = L_{m,b}$ is a non-vertical line.  Let $p_1 = (x_1, y_1)$, $p_2 = (x_2, y_2)$, and $p = (x,y)$.  And so $y \neq mx + b$ and $x_1 \neq x_2$.

If $x=x_1$, then $M_1 = L_x$ is a vertical line, and we have 
\begin{gather*}
\varphi(e_3) - \varphi(e_2) + \varphi(e_1) = \\
-x_1y + xy_1 + x_1b.
\end{gather*}

If $x=x_2$, then $M_2 = L_x$ is a vertical line, and we have
\begin{gather*}
\varphi(e_4) - \varphi(e_5) + \varphi(e_6) = \\
-x_2y + xy_2 + x_2b.
\end{gather*}

If $x \neq x_1$, then $M_1 = L_{m_1,b_1}$ is a non-vertical line with slope $m_1 = (y-y_1)(x-x_1)^{-1}$ and $y$-intercept $b_1 = y_1 - m_1 x_1.$  In this case we have
\begin{gather*}
\varphi(e_3) - \varphi(e_2) + \varphi(e_1) = \\
xb_1 - x_1b_1 + x_1b.
\end{gather*}
Substituting for $b_1$, we obtain
\begin{gather*}
x(y_1 - (y-y_1)(x-x_1)^{-1}x_1) - x_1(y_1 - (y-y_1)(x-x_1)^{-1}x_1) + x_1b = \\
(x-x_1)y_1 - (x-x_1)(y-y_1)(x-x_1)^{-1}x_1 + x_1b = \\
(x-x_1)y_1 - (y-y_1)x_1 + x_1b = \\
xy_1 - x_1y + x_1b.
\end{gather*}

If $x \neq x_2$, then $M_2 = L_{m_2,b_2}$ is a non-vertical line with slope $m_2 = (y-y_2)(x-x_2)^{-1}$ and $y$-intercept $b_2 = y_2 - m_2 x_2.$
Similarly we obtain 
\begin{gather*}
\varphi(e_4) - \varphi(e_5) + \varphi(e_6) = \\
xy_2 - x_2y + x_2b.
\end{gather*}

In any case, we need to show that 
\begin{gather*}
\varphi(e_3) - \varphi(e_2) + \varphi(e_1) \neq \varphi(e_4) - \varphi(e_5) + \varphi(e_6), \text{  i.e.  } \\
xy_1 - x_1y + x_1b \neq xy_2 - x_2y + x_2b.
\end{gather*}

Now $y_1 = mx_1 + b$, and $y_2 = mx_2 + b$, and, after substituting, we need then to show that
$$x_1(mx + b - y) \neq x_2(mx + b - y),$$
and this is true since $y \neq mx + b$ and $x_1 \neq x_2$.

\textit{Proof of the surjectivity of $\rho_{L,p}$.}

\textit{Case 1.}  Suppose that $L = L_b$ is a vertical line and $p = (x,y)$ is not incident with $L$.  And so $x \neq b$.  Let $z \in \mathbb{F}$ be arbitrary.  Let $y_1 = (x-b)^{-1}(z + yb).$  Let $p_1 = (b,y_1)$.  Let $M_1$ be the unique line incident with $p$ and $p_1$, and so $M_1 = L_{m_1,b_1}$ with slope $m_1 = (y-y_1)(x-b)^{-1}$ and $y$-intercept $b_1 = y_1 - m_1 b.$  Let $w_1 = (L,e_1,p_1,e_2,M_1,e_3,p)$ be the corresponding walk in $\Gamma$.  It follows by a similar computation as in the injectivity proof that 
\begin{gather*}
y_1(x-b) - yb = z, \text{  i.e.  } \\
xy_1 - yb - by_1 =z, \text{  i.e.  } \\
xb_1 - bb_1 - by_1 = z, \text{  i.e.  } \\
\varphi(e_3) - \varphi(e_2) + \varphi(e_1) = z.
\end{gather*}

\textit{Case 2.}  Suppose that $L = L_{m,b}$ is a non-vertical line and $p = (x,y)$ is not incident with $L$.  And so $y \neq mx + b$.  Let $z \in \mathbb{F}$ be arbitrary.  Let  $x_1 = (mx + b - y)^{-1} (z - xb).$  Let $p_1 = (x_1, y_1)$ be incident with $L$, and so $y_1 = mx_1 + b$.  Let $M_1$ be the unique line incident with $p$ and $p_1$.  Let $w_1 = (L,e_1,p_1,e_2,M_1,e_3,p)$ be the corresponding walk in $\Gamma$.  

If $x=x_1$, then $M_1 = L_x$ is a vertical line.   It follows by a similar computation as in the injectivity proof that
\begin{gather*}
x_1(mx + b - y) + xb = z, \text{ i.e. } \\
- x_1y + xy_1 + x_1b = z, \text{ i.e. } \\
\varphi(e_3) - \varphi(e_2) + \varphi(e_1) = z.
\end{gather*}

If $x \neq x_1$, then $M_1 = L_{m_1,b_1}$ is a non-vertical line with slope $m_1 = (y-y_1)(x-x_1)^{-1}$ and $y$-intercept $b_1 = y_1 - m_1 x_1.$   It follows by a similar computation as in the injectivity proof that
\begin{gather*}
x_1(mx + b - y) + xb = z, \text{ i.e. } \\
xy_1 - x_1y + x_1b = z, \text{ i.e. } \\
xb_1 - x_1b_1 + x_1b = z, \text{  i.e.  } \\
\varphi(e_3) - \varphi(e_2) + \varphi(e_1) = z.
\qedhere
\end{gather*}
\end{proof}

When $\mathbb{F}$ is finite, say order $q$, we checked by computer that the $GQ(q+1,q-1)$ constructed in Theorem 7 is isomorphic to the dual of the Payne GQ $P(W(q),x)$.  This has been confirmed for values of $q$ up to $16$, and we conjecture that it is always true.  We end with two open problems for further study.

\begin{problem}
Determine all of the gain functions on an affine plane over a field that yield a generalized quadrangle as in Theorem 7.  Do two different such gain functions (on the same affine plane) yield isomorphic generalized quadrangles?  
\end{problem}

\begin{problem}
Find other examples of gain functions on Steiner systems that yield generalized quadrangles.
\end{problem}

\bigskip\noindent {\bf Acknowledgment.} The author thanks Tom Zaslavsky for helpful discussions and suggestions regarding this project.  The author also thanks Anthony D. Berard, Jr.

\bibliographystyle{plainnat}
\bibliography{ref}

\end{document}